\newcommand{\R}{\mathbb{R}}
\renewcommand{\log}{\mathrm{ln}\,}
\newtheorem{Theo}{Theorem}
\newtheorem{Prop}{Proposition}
\begin{document}
\title{Optimal version of the Picard-Lindel\"of theorem}
\author[J.-C. Schlage-Puchta]{Jan-Christoph Schlage-Puchta}
\begin{abstract}
Consider the differential equation $y'=F(x,y)$. We determine the weakest possible upper bound on $|F(x,y)-F(x,z)|$ which guarantees that this equation has for all initial values a unique solution, which exists globally.
\end{abstract}
\maketitle

Let $F:\R^2\rightarrow\R$ be a continuous function. The well known global Picard-Lindel\"of theorem states 
that if $F$ is Lipschitz continuous with respect to the second variable, then for every real number $y_0$, 
the initial value problem $y'=F(x, y)$, $y(0)=y_0$ has a unique solution, which exists globally. On the 
other hand the initial value problem $y'=2\sqrt{|y|}$, $y(0)=0$ has infinitely many solutions, which can be parametrized 
by real numbers $-\infty\leq a\leq b\leq \infty$ as
\[
y=\begin{cases} -(x-a)^2, & x<a,\\ 0, & a\leq x\leq b,\\ (x-b)^2, & x>b.\end{cases}
\]
We conclude that uniqueness does not hold in general without the Lipschitz condition. Similarly the initial value problem 
$y'=1+y^2$, $y(0)=0$ has the solution $\tan x$, which does not exist globally. Thus, global existence also 
needs some kind of Lipschitz condition. Here we show that while some condition is necessary, being Lipschitz 
is unnecessarily strict, and determine the optimal condition. We prove the following.
\begin{Theo}
\label{thm:main}
Let $\varphi:[0, \infty]\rightarrow(0, \infty)$ be a non-decreasing function. Then the following are equivalent.
\begin{enumerate}
\item[(i)] The series $\sum_{n=1}^\infty \frac{1}{\varphi(n)}$ diverges;
\item[(ii)] For every continuous functions $F:\R^2\rightarrow\R$ for which there exists a continuous function $\psi:\R\rightarrow(0, \infty)$ such that 
\begin{equation}
\label{eq:Lipschitz}
|F(x, y)-F(x, z)|<(z-y)\psi(x)\varphi(|\log(z-y)|)
\end{equation}
holds for all real numbers $x, y, z$ such that $y<z\leq y+1$,
the initial value problem $y'=F(x, y)$, $y(0)=y_0$ has a unique local solution.
\item[(iii)] For every continuous functions $F:\R^2\rightarrow\R$ for which there exists a continuous function $\psi:\R\rightarrow(0, \infty)$ such that 
\begin{equation}
\label{eq:growth}
|F(x, y)|<|y|\psi(x)\varphi(\log(2+|y|))
\end{equation}
holds for all $x$ and $y$, every local solution of the initial value problem $y'=F(x, y)$, $y(0)=y_0$, where $y_0$ is arbitrary, can be continued to a global solution.
\end{enumerate}
\end{Theo}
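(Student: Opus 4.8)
The plan is to prove the cycle (i)$\Rightarrow$(ii), (i)$\Rightarrow$(iii), together with the two contrapositives $\neg$(i)$\Rightarrow\neg$(ii) and $\neg$(i)$\Rightarrow\neg$(iii); this yields (i)$\Leftrightarrow$(ii) and (i)$\Leftrightarrow$(iii). The mechanism behind every implication is one change of variables. Since $\varphi$ is non-decreasing, the integral test makes (i) equivalent to $\int_1^\infty \frac{dt}{\varphi(t)}=\infty$. The substitution $u=e^{-t}$ converts this into $\int_0^1\frac{du}{u\varphi(|\log u|)}=\infty$, the quantity controlling \eqref{eq:Lipschitz}, while $2+s=e^{t}$ converts it into $\int^\infty\frac{ds}{(2+s)\varphi(\log(2+s))}=\infty$, the quantity controlling \eqref{eq:growth}. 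So I would first record these equivalent reformulations and note that (i) is exactly the Osgood-type divergence attached to the modulus $\omega(h)=h\varphi(|\log h|)$.

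For (i)$\Rightarrow$(ii), existence of a local solution is Peano's theorem, so only uniqueness is at stake. Given two solutions agreeing at a point I set $g=y_2-y_1$ and run the Osgood argument with this specific modulus: on a maximal subinterval on which $0<g\le1$, the bound \eqref{eq:Lipschitz} gives $|g'|<g\,\psi(x)\varphi(|\log g|)$, so for $\Phi(s)=\int_s^{c}\frac{du}{u\varphi(|\log u|)}$ one has $\bigl|\tfrac{d}{dx}\Phi(g(x))\bigr|<\psi(x)$; integrating bounds $\Phi(g(x))$, contradicting $\Phi(g(x))\to\infty$ as $g\to0$, which divergence of (i) forces. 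For (i)$\Rightarrow$(iii) I use a comparison estimate: on a maximal interval $[0,\beta)$ with $\beta<\infty$, in the regime where $|y|$ is large (hence $y$ has fixed sign), \eqref{eq:growth} gives $\frac{d}{dx}G(|y(x)|)<\max_{[0,\beta]}\psi$ for $G(w)=\int_0^w\frac{ds}{(2+s)\varphi(\log(2+s))}$, so $G(|y(x)|)$ stays bounded; but $G(w)\to\infty$ as $w\to\infty$ by (i), so $|y|$ cannot escape to infinity in finite time, and the solution continues.

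Now assume $\neg$(i), so the three integrals converge. Condition (iii) is then violated by an essentially autonomous example: take a continuous $f$ with $f(y)\asymp y\varphi(\log(2+y))$ for large $|y|$, chosen so that \eqref{eq:growth} holds with a constant $\psi$; the solution of $y'=f(y)$, $y(0)=1$ blows up at the finite time $\int_1^\infty\frac{dy}{f(y)}<\infty$, after replacing $\varphi$ by a continuous minorant with the same convergence behaviour.

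Violating (ii) is the crux of the theorem. I want an autonomous $F(x,y)=f(y)$ with $f(0)=0$ whose increments obey \eqref{eq:Lipschitz} while $\int_0^\delta\frac{dy}{f(y)}<\infty$, the latter producing a nonzero solution through the origin alongside $y\equiv0$. The natural candidate is $f=\omega$ with $\omega(y)=y\varphi(|\log y|)$: its reciprocal integral is precisely the convergent integral of (i), and since $\omega(y)/y=\varphi(|\log y|)$ is monotone on $(0,1]$, $\omega$ is star-shaped, hence subadditive, which gives the increment bound $|\omega(z)-\omega(y)|\le\omega(z-y)$ essentially for free. Two defects must be repaired, and this is the main obstacle: $\varphi$ may be discontinuous, so $\omega$ need not be continuous although $F$ must be; and $\omega(0^+)=\lim_{t\to\infty}e^{-t}\varphi(t)$ need not vanish, so $\omega$ need not be a genuine modulus and $y\equiv0$ need not be a solution. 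I would therefore replace $\varphi$ by $\varphi_1(t)=\min\bigl(e^{t}\inf_{s\le t}e^{-s}\varphi(s),\,e^{t/2}\bigr)$. The inner regularization is continuous, non-decreasing, bounded above by $\varphi$, and makes $e^{-t}\varphi_1$ non-increasing, so the associated $\omega_1(y)=y\varphi_1(|\log y|)$ is itself increasing, upgrading subadditivity to the two-sided bound $|\omega_1(z)-\omega_1(y)|\le\omega_1(z-y)$; the cap $e^{t/2}$ forces $\varphi_1=o(e^{t})$, hence $\omega_1(0^+)=0$ and continuity at the origin. Setting $f=\omega_1$ near $0$ (extended routinely elsewhere) then satisfies \eqref{eq:Lipschitz} with $\psi\equiv2$, using $\omega_1\le\omega$. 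The one delicate verification that remains — and which I expect to be the hardest step — is that regularizing has not destroyed convergence, i.e. $\int^\infty\frac{dt}{\varphi_1(t)}<\infty$; this follows from $\int^\infty\frac{dt}{\varphi}<\infty$ and $\int^\infty e^{-t/2}\,dt<\infty$ together with a short estimate bounding the contribution of the flat stretches of the inner infimum. With $\int_0^\delta\frac{dy}{f}<\infty$ secured, $y'=f(y)$ has two solutions through $(0,0)$ and (ii) fails.
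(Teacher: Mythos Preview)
Your proposal is correct and follows a genuinely different route from the paper's proof.

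For the forward implications the paper works \emph{discretely}: it tracks the level-crossing times $x_n$ at which a solution (or a difference of solutions) hits $e^{-n}$, and bounds $x_{n+1}-x_n$ below by $c/\varphi(n+1)$, so that divergence of $\sum 1/\varphi(n)$ forces $x_n\to\infty$. You instead run the classical Osgood argument with the modulus $\omega(h)=h\,\varphi(|\log h|)$, differentiating the antiderivative $\Phi(g)=\int_g^c du/\omega(u)$ along the flow. The two are of course equivalent after the change of variables $u=e^{-t}$ you record at the outset, but the presentations differ: the paper's discrete bookkeeping is what later feeds directly into its explicit Proposition with concrete constants, whereas your integral formulation makes the link to Osgood uniqueness transparent and avoids the paper's reduction to the special case $F(x,0)=0$ followed by time reversal.

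For the counterexamples the difference is more substantial. To violate (ii) the paper builds a \emph{piecewise linear} $F$ with prescribed nodal values $F(e^{-n})=e^{-n}\varphi(n-1)$ and then checks \eqref{eq:Lipschitz} by a three-case computation. You instead take $f=\omega_1$ for a regularised $\varphi_1(t)=\min\bigl(e^t\inf_{s\le t}e^{-s}\varphi(s),\,e^{t/2}\bigr)$, so that $\omega_1$ is automatically increasing with $\omega_1/y$ non-increasing; then star-shapedness gives subadditivity and the increment bound $|f(z)-f(y)|\le\omega_1(z-y)\le\omega(z-y)$ drops out without casework. The price is the lemma that regularisation preserves $\int^\infty dt/\varphi_1<\infty$; your sketch via ``flat stretches'' does go through, since on each flat stretch $[b_k,a_{k+1}]$ of the running infimum one has $\int_{b_k}^{a_{k+1}}dt/h\le(1-e^{-(a_{k+1}-b_k)})/\varphi(b_k)$ and the endpoint relation $\varphi(b_{k+1})\ge\varphi(b_k)e^{a_{k+1}-b_k}$ makes these bounds telescope to at most $1/\varphi(b_0)$. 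So each construction buys something: the paper's is entirely elementary and yields the quantitative corollaries; yours is conceptually cleaner, reusable, and sidesteps the case analysis.
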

In particular it is not possible to prove a general Picard-Lindel\"off type theorem with a bound that is strictly weaker than (\ref{eq:Lipschitz}) or (\ref{eq:growth}) for a function $\varphi$ satisfying (i). In this sense our theorem is indeed optimal. It might still be possible to prove existence or uniqueness under weaker conditions on the growth of $F$, if we impose other additional constrictions. However, the counterexamples we construct to prove (ii)$\Rightarrow$(i) and (iii)$\Rightarrow$(i) involve quite well behaved functions $F$, so it is not clear how such an additional assumption could look like.

Cid and Pouso \cite[Theorem~1.2]{partial} gave a quite ingenious proof for a uniqueness theorem, which is equivalent to the implication (i)$\Rightarrow$(ii) of our theorem, provided that $F(x, y_0)=0$ for all $x$ in a neighbourhood of 0. Rudin \cite{Rudin} showed that if every global solution of $y'=F(x,y)$ is unique, then there exists a function $h$ such that for all $y_0$ there exists some $x_0$, such that the solution of $y'=F(x,y)$, $y(0)=y_0$, satisfies $|y(x)|\leq h(x)$ for all $x>x_0$, whereas if solutions are not unique, then there might exist arbitrary fast growing solutions. Although this result is only loosely connected to our theorem, this work is relevant here, because the construction of the counterexamples in \cite{Rudin} is quite similar to our construction. We would like to thank the referee for making us aware of these publications.

The usual proof of the Picard-Lindel\"of theorem uses contraction on a suitably defined Banach space. For extensions of the Picard-Lindel\"of theorem using contractions we refer the reader to \cite{Feng}, \cite{Hayek} and \cite{Morci}. A different generalization was given in \cite{Sieg}. However, our proof is more elementary, once some local existence result is available. We will use Peano's theorem in the following form.
 
 \begin{Theo}[Peano]
 Let $F:\R^2\rightarrow\R$ be a continuous function, $y_0$ be some real number. Then there exists some $\epsilon>0$ and a differentiable function $y:(-\epsilon, \epsilon)\rightarrow\R$, which satisfies $y'=F(x,y), y(0)=y_0$ .
 \end{Theo}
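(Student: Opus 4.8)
The plan is to pass to the equivalent integral equation
\[
y(x) = y_0 + \int_0^x F(t, y(t))\,dt,
\]
and to obtain a solution as a uniform limit of explicitly constructed approximate solutions. First I would localize. Since $F$ is continuous it is bounded, say $|F|\leq M$, on some compact rectangle $R=[-a, a]\times[y_0-b, y_0+b]$. Setting $\epsilon=\min(a, b/M)$ guarantees that any curve issuing from $(0, y_0)$ with slope bounded by $M$ stays inside $R$ for $|x|<\epsilon$: indeed $|y(x)-y_0|\leq M|x|\leq M\epsilon\leq b$. Hence $F$ remains evaluable and bounded along every approximation we build, and it suffices to work on $[0, \epsilon]$, the construction on $[-\epsilon, 0]$ being symmetric.

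Next I would construct Euler polygons. For each $n$ I partition $[0, \epsilon]$ into $n$ equal subintervals with nodes $x_0=0, x_1, \dots, x_n=\epsilon$, and define $y_n$ to be the piecewise-linear function with $y_n(0)=y_0$ whose slope on $[x_k, x_{k+1}]$ equals $F(x_k, y_n(x_k))$. By the choice of $\epsilon$ each point $(x, y_n(x))$ lies in $R$, so $y_n$ is well defined, uniformly bounded, and Lipschitz with constant $M$. Consequently the family $\{y_n\}$ is uniformly bounded and equicontinuous, and by the Arzel\`a--Ascoli theorem I may extract a subsequence converging uniformly on $[0, \epsilon]$ to some continuous function $y$.

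It remains to show that $y$ solves the integral equation. Writing $\lfloor t\rfloor_n$ for the left node of the subinterval containing $t$, integration of the slope gives
\[
y_n(x) = y_0 + \int_0^x F\bigl(\lfloor t\rfloor_n, y_n(\lfloor t\rfloor_n)\bigr)\,dt.
\]
Now $F$ is uniformly continuous on the compact rectangle $R$; as $n\to\infty$ the mesh tends to $0$, $y_n\to y$ uniformly, and $|y_n(t)-y_n(\lfloor t\rfloor_n)|\leq M/n\to 0$, so the integrand converges uniformly to $F(t, y(t))$. Passing to the limit along the subsequence yields $y(x)=y_0+\int_0^x F(t, y(t))\,dt$, and since the integrand is continuous the fundamental theorem of calculus shows that $y$ is differentiable with $y'=F(x, y)$ and $y(0)=y_0$.

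The main obstacle is precisely this final passage to the limit. In contrast to the Lipschitz setting there is no contraction available, so one cannot track a single iterate; instead one must lean on compactness via Arzel\`a--Ascoli to produce a candidate solution and on the uniform continuity of $F$ to verify that the discretization error of the Euler scheme vanishes. One subtlety worth flagging is that the limit need not be unique, and different subsequences may converge to different solutions, exactly as for $y'=2\sqrt{|y|}$; since the statement asserts only existence this causes no difficulty.
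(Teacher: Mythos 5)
The paper does not prove this statement at all: Peano's theorem is quoted as a classical black box (the paper's own contribution begins only after it, with the implication (i)$\Rightarrow$(ii)). So there is no proof in the paper to compare against; what you have written is the standard textbook proof via Euler polygons and Arzel\`a--Ascoli, and it is correct and complete. Two trivial points you may wish to tidy: the bound on $|y_n(t)-y_n(\lfloor t\rfloor_n)|$ should be $M\epsilon/n$ rather than $M/n$ (harmless, since it still tends to $0$), and when you glue the solutions on $[0,\epsilon]$ and $[-\epsilon,0]$ you should remark that both one-sided derivatives at $0$ equal $F(0,y_0)$, so the glued function is genuinely differentiable at the origin. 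Your closing observation that the limit is only produced along a subsequence and need not be unique is exactly the right caveat, and it is consistent with the paper's use of the theorem, which invokes only existence and derives uniqueness separately from the hypothesis (i).
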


We begin with the implication (i)$\Rightarrow$(ii) in the special case that the zero function is a solution, that is, $F(x,0)=0$ holds for all $x$. Let $y$ be a function satisfying $y'=F(x,y)$, $y(0)=1$. We claim that $y(x)>0$ for all $x>0$. If a solution tends to $+\infty$ in finite time without attaining negative values we say that this statement is also satisfied. For $n\geq 1$ define $x_n$ to be the smallest positive solution of the equation $y(x_n)=e^{-n}$. If there is some $n$ such that this equation is not solvable, then $y(x)> e^{-n}$ for this particular $n$ and all $x>0$, and our claim is trivially true, henceforth we assume that this equation is solvable for all $n$. Define $x_n^+$ to be the largest solution of the equation $y(x)=e^{-n}$ with $x\in[x_n, x_{n+1}]$. Clearly, $x_n^+$ exists. In the interval $[x_n^+, x_{n+1}]$ we have $y(x)\in[e^{-(n+1)}, e^{-n}]$, thus,
\begin{eqnarray*}
x_{n+1}-x_n^+ & \geq & \frac{e^{-n}-e^{-(n+1)}}{\max\limits_{x\in[x_n^+, x_{n+1}]} |y'(x)|}\\
& \geq & \frac{e^{-n}-e^{-(n+1)}}{\max\limits_{x\in[x_n^+, x_{n+1}]}\max\limits_{t\in[e^{-(n+1)}, e^{-n}]} |F(x, t)|} \\
& \geq & \frac{e^{-n}-e^{-(n+1)}}{e^{-n}\varphi(n+1)\max\limits_{x\in[x_n^+, x_{n+1}]}\psi(x)}\\
& \geq & \frac{1}{2\varphi(n+1)\max\limits_{x\in[0, x_{n+1}]}\psi(x)}.
\end{eqnarray*}
Assume that the sequence $(x_n)$ is bounded. Then $\max\limits_{x\in[0, x_{n+1}]}\psi(x)$ is bounded by some constant $C$. We conclude that in this case
\[
x_{n+1}-x_n\geq x_{n+1}-x_n^+\geq\frac{1}{2C\varphi(n+1)}.
\]
By assumption we have that $\sum\frac{1}{\varphi(n)}$ diverges, which contradicts the assumption that $(x_n)$ is bounded. Hence, $(x_n)$ tends to infinity. By the definition of $x_n$ we have $y(x)>0$ in $[0, x_n]$, and our claim follows.

Next suppose that $F(x,0)=0$ holds for all $x$, and $y_1$ is a solution of $y'=F(x,y)$ with $y(0)\neq 0$. Then $\tilde{y}=\frac{y_1}{y_1(0)}$ is a solution of $y'=\frac{1}{y_1(0)}F(x,y)$. As $\tilde{y}(0)=1$, we conclude that $\tilde{y}(x)>0$ holds for all $x>0$, and therefore $y_1(x)\neq 0$ for all $x>0$. 

Now suppose that $F$ satisfies the assumption of the theorem, and $y_1, y_2$ are solutions of $y'=F(x,y)$ with $y_1(0)\neq y_2(0)$. Then we consider the differential equation
\[
y' = F(x, y+y_1(x))-y_1'(x)
\]
The constant function $y=0$ is a solution. The function $y(x)=y_2(x)-y_1(x)$ is also a solution, as for this function we have
\[
F(x, y_2(x)-y_1(x)+y_1(x))-y_1'(x) = F(x, y_2(x))-y_1'(x) = y_2'(x)-y_1'(x).
\]
The function $G(x,t)=F(x, t+y_1(x))-y_1'(x)$ is continuous, and satisfies 
\begin{eqnarray*}
|G(x, t_1)-G(x, t_2)| & = & |F(x, t_1+y_1(x))-F(x, t_2+y_1(x))|\\
 & \leq & (t_1-t_2)\psi(x)\varphi(|\log(t_1-t_2)|)
\end{eqnarray*}
as well as $G(x,0)=F(x, y_1(x))-y_1'(x)=0$. In particular we know that the claimed implication holds for $G$, and we obtain that $y_1-y_2$ does not vanish. As we may revert time, it follows that solutions are unique.

Now we prove the implication (i)$\Rightarrow$(iii). 
By symmetry it suffices to consider the range $[0, \infty)$. Let $I\subseteq[0, \infty)$ be the maximal range of a solution. By Peano's theorem we know that solutions exist locally, that is, $I$ is half open. Suppose $I=[0, x_{\max})$ with $x_{\max}<\infty$. A computation similar to the one used for uniqueness shows that $y$ is bounded on $[0, x_{\max})$. As $\psi$ is continuous and $[0, x_{\max}]$ is compact, $\psi$ is also bounded. Put $Y=\sup_{x\leq x_{\max}} |y(x)|\psi(x)$. Then $F$ is bounded on $[0, x_{\max}]\times[-Y, Y]$, that is, $y$ is Lipschitz continuous on $[0, x_{\max})$, and we can extend $y$ continuously to $[0, x_{\max}]$. Moreover, as $F$ is continuous, this extension satisfies the differential equation in $x_{\max}$ if we interpret the derivative as a one-sided derivative. By Peano there exists a local solution around $x_{\max}$, which by the uniqueness we already know coincides with $y$ for $x<x_{\max}$, hence, $y$ can be extended beyond $x_{\max}$ as a solution of the differential equation. This contradicts the definition of $x_{\max}$, and we conclude that $x_{\max}=\infty$, that is, $y$ exists globally.

We now turn to the reverse implications. For a given function $\varphi$, such that $\sum_{n=1}^\infty\frac{1}{\varphi(n)}$ converges, we construct functions $F$ which satisfy the conditions (\ref{eq:Lipschitz}) resp. (\ref{eq:growth}), but for which the solutions of the corresponding differential equation are not unique resp. tend to infinity. We claim that we may assume without loss of generality that $\varphi(n)\leq n^2$. In fact, $\sum_{n=1}^\infty\frac{1}{\varphi(n)}$ converges if and only if $\sum_{n=1}^\infty\frac{1}{\min(\varphi(n), n^2)}$ converges, hence, replacing $\varphi(n)$ by $\min(\varphi(n), n^2)$ does not change condition (i), whereas conditions (ii) and (iii) become weaker.

Next we show (iii)$\Rightarrow$(i). Suppose that $\sum_{n=1}^\infty\frac{1}{\varphi(n)}$ converges. Let $F$ be a piecewise linear function satisfying $F(0)=1$, $F(e^n)=e^n\varphi(n)$. Let $y$ be a solution of the differential equation $y'=F(y)$, $y(0)=0$, and let $x_n$ be the positive solution of the equation $y(x)=e^n$. Note that $x_n$ exists and is unique, as $y'\geq 1$ for all $x\geq 0$. Then we have
\[
x_{n+1}-x_n \leq \frac{e^{n+1}-e^n}{\min\limits_{x_n\leq x\leq x_{n+1}} y'(x)} =  \frac{e^{n+1}-e^n}{\min\limits_{e^n\leq y\leq e^{n+1}} F(y)} \leq \frac{e^{n+1}-e^n}{F(e^n)} =\frac{e^{n+1}-e^n}{e^n\varphi(n)} \leq \frac{2}{\varphi(n)}.
 \]
As $\sum_{n=1}^\infty\frac{1}{\varphi(n)}$ converges, we conclude that the sequence $x_n$ converges to some finite limit $x_\infty$, that is, $y(x)$ tends to infinity as $x\rightarrow x_\infty$. We conclude that if $\sum_{n=1}^\infty\frac{1}{\varphi(n)}$ converges, then there exists a differential equation as in (iii) which does not have a global solution.

Now consider the implication (ii)$\Rightarrow$(i). Suppose that $\sum_{n=1}^\infty\frac{1}{\varphi(n)}$ converges, and let $F$ be the function satisfying $F(t)=0$ for $t\leq 0$, $F(t)=\varphi(0)$ for $t\geq \frac{1}{e}$, $F(e^{-n})=e^{-n}\varphi(n-1)$, which is continuous and linear on all intervals $(e^{-n-1}, e^{-n})$. We claim that $F$ satisfies (2). As $F$ is constant outside $[0, \frac{1}{e}]$, it suffices to check the case $0\leq y<z\leq \frac{1}{e}$. 

If $y=0$, let $n$ be the unique integer satisfying $e^{-n-1}<z\leq e^{-n}$. Then we have
\begin{eqnarray*}
|F(y)-F(z)| & = & F(z)\\
 & = & \frac{e^{-n}-z}{e^{-n}-e^{-n-1}}e^{-n-1}\varphi(n) + \frac{z-e^{-n-1}}{e^{-n}-e^{-n-1}}e^{-n}\varphi(n-1)\\
 & \leq & \frac{e^{-n}-z}{e^{-n}-e^{-n-1}}e^{-n-1}\varphi(n) + \frac{z-e^{-n-1}}{e^{-n}-e^{-n-1}}e^{-n}\varphi(n)\\
 & = & z\varphi(n)\\
 & \leq & z\varphi(|\log z|).
\end{eqnarray*}
If $y>0$, let $m\leq n$ be the unique integers satisfying $e^{-n-1}<y\leq e^{-n}$, $e^{-m-1}<z\leq e^{-m}$. If $m<n$, then 
\begin{eqnarray*}
|F(y)-F(z)| & = & F(z)- F(y)\\
 & = & \frac{e^{-m}-z}{e^{-m}-e^{-m-1}}e^{-m-1}\varphi(m) + \frac{z-e^{-m-1}}{e^{-m}-e^{-m-1}}e^{-m}\varphi(m-1)\\
&&-\frac{e^{-n}-y}{e^{-n}-e^{-n-1}}e^{-n-1}\varphi(n) - \frac{y-e^{-n-1}}{e^{-n}-e^{-n-1}}e^{-n}\varphi(n-1)\\
& \leq&z\varphi(m)-y\varphi(n-1)\\
& \leq & (z-y)\varphi(m)\\
& \leq & (z-y)\varphi(|\log z|)\\
& \leq & (z-y)\varphi(|\log(z-y)|),
\end{eqnarray*}
and our claim follows.
If $m=n$, then
\begin{eqnarray*}
|F(y)-F(z)| & = & F(z)- F(y)\\
 & = & \frac{e^{-m}-z}{e^{-m}-e^{-m-1}}e^{-m-1}\varphi(m) + \frac{z-e^{-m-1}}{e^{-m}-e^{-m-1}}e^{-m}\varphi(m-1)\\
&&-\frac{e^{-n}-y}{e^{-n}-e^{-n-1}}e^{-n-1}\varphi(n) - \frac{y-e^{-n-1}}{e^{-n}-e^{-n-1}}e^{-n}\varphi(n-1)\\
& = &(z-y)\frac{e^{-m}\varphi(m-1)-e^{-m-1}\varphi(m)}{e^{-m}-e^{-m-1}}\\
 & \leq & (z-y)\varphi(m)\\
 & \leq & (z-y)\varphi(|\log(z-y)|)
\end{eqnarray*}
We find that (2) holds in all cases.

Now consider the differential equation $y'=-F(y)$. This equation has the obvious solution $y=0$. Now consider the solution with starting value $y(0)=\frac{1}{e}$. As $y'(x)<0$ for all $x$ with $y(x)>0$, there is for every $n$ a unique $x_n$ solving the equation $y(x)=e^{-n}$. We have
\[
x_{n+1}-x_n \leq \frac{e^{-n}-e^{-n-1}}{\min\limits_{x_n\leq x\leq x_{n+1}} y'(x)} =\frac{e^{-n}-e^{-n-1}}{F(e^{-n-1})} = \frac{e-1}{\varphi(n)}.
\]
As $\sum\frac{1}{\varphi(n)}$ converges, the sequence $(x_n)$ converges to some limit $x_\infty$, and we obtain that $y(x)=0$ for $x>x_\infty$. Reversing time we find that the equation $y'=F(y)$, $y(0)=0$ does not have a unique solution. Hence, if (i) fails, so does (ii), and the proof of the theorem is complete.

We remark that the proof not only yields global existence and uniqueness of solutions, but also gives explicit bounds. Here an explicit measure for uniqueness is a bound how quickly different solutions can diverge. Equivalently we can revert time and ask how quickly solutions with different starting conditions converge. By computing the sequence $(x_n)$ occurring in the proof of the implication (i)$\Rightarrow$(ii) for specific functions $\varphi$ we obtain the following.
\begin{Prop}
\begin{enumerate}
\item[(i)] Let $F:\R^2\rightarrow\R$ be a continuous function satisfying
\[
|F(x,y)-F(x,z)|<L|y-z|
\]
and $F(x,0)=0$ for all real numbers $x, y$ and $z$. Then every solution of the equation $y'=F(x, y)$ satisfies $|y(x)|\leq e^{Lx}|y(0)|$ for all $x\geq 0$, and if $y_1, y_2$ are solutions, and $x\geq 0$, then we have
\[
|y_1(x)-y_2(x)|\geq |y_1(0)-y_2(0)| e^{-Lx}.
\]
\item[(ii)] Let $F:\R^2\rightarrow\R$ be a continuous function satisfying
\begin{equation}
\label{eq:Propsquare}
|F(x,y)-F(x,z)| < |y-z| \left(1+\sqrt{\big|\log(|y-z|)\big|}\right)
\end{equation}
for all real numbers $x, y$ and $z$ such that $y\leq z\leq y+1$.
If $y_1, y_2$ are solutions of the equation $y'=F(x,y)$ satisfying $y_1(0)-y_2(0)=1$, then we have
\[
|y_1(x)-y_2(x)|\geq e^{-x^2}
\]
for all $x>35$.
\item[(iii)] Let $F:\R^2\rightarrow\R$ be a continuous function satisfying
\begin{equation}
\label{eq:Propexpo}
|F(x,y)-F(x,z)| < |y-z| \left(1+\big|\log(|y-z|)\big|\right)
\end{equation}
for all real numbers $x,y,z$ such that $y\leq z\leq y+1$.
If $y_1, y_2$ are solutions of the equation $y'=F(x,y)$ satisfying $y_1(0)-y_2(0)=1$, then we have
\[
|y_1(x)-y_2(x)|\geq e^{-e^{2x}-4}
\]
for all $x\geq 0$.
\end{enumerate}
\end{Prop}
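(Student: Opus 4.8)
The plan is to specialise the quantitative information already produced in the proof of (i)$\Rightarrow$(ii) to the three concrete pairs $(\psi,\varphi)$ appearing in the hypotheses, tracking all constants. In each part I would first pass to the reduced equation used there: putting $w=y_1-y_2$ and $G(x,t)=F(x,t+y_2(x))-y_2'(x)$, the function $w$ solves $w'=G(x,w)$, one has $G(x,0)=0$ and the bound (\ref{eq:Lipschitz}) holds for $G$, and $w(0)=y_1(0)-y_2(0)$. Since $w$ never vanishes by the uniqueness already established, it keeps the sign of $w(0)$, so $|y_1(x)-y_2(x)|=|w(x)|$ and it suffices to bound $|w|$ from below; for the growth bound in part (i) I would apply the same estimates to $w=y$, using $F(x,0)=0$.

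For part (i) the sharp constant comes not from the discrete sequence but from the differential inequality that the sequence discretises. Since $\varphi\equiv 1$ and one may take $\psi\equiv L$, the hypothesis gives $|w'|=|G(x,w)-G(x,0)|\le L|w|$, hence $\bigl|\tfrac{d}{dx}\log|w|\bigr|\le L$ wherever $w\ne 0$. Integrating from $0$ to $x$ yields simultaneously $|w(x)|\le |w(0)|e^{Lx}$ and $|w(x)|\ge |w(0)|e^{-Lx}$; the first applied to $w=y$ gives the growth bound and the second to $w=y_1-y_2$ gives the divergence bound, both with the optimal constant $L$.

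For parts (ii) and (iii) I would run the sequence $(x_n)$ of the main proof for $w$, now with $\psi\equiv 1$. Keeping the un-rounded constant, the step estimate reads $x_{n+1}-x_n\ge (1-e^{-1})/\varphi(n+1)$, and for $x\in[x_N,x_{N+1})$ one has $|w(x)|>e^{-(N+1)}$. Summing gives $x_N\ge (1-e^{-1})\sum_{k=1}^{N}1/\varphi(k)$, and the task reduces to estimating this partial sum from below by an integral and inverting the resulting relation to bound $N$ in terms of $x$. For $\varphi(k)=1+\sqrt k$ the sum exceeds a quantity of the form $2\sqrt N$ minus lower order terms, which after inversion yields $N+1\le x^2$, hence $|w(x)|\ge e^{-x^2}$; for $\varphi(k)=1+k$ the sum exceeds $\log((N+2)/2)$, which yields $N+1\le e^{2x}+4$, hence $|w(x)|\ge e^{-e^{2x}-4}$.

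The main obstacle is precisely the control of the lower order terms in these two inversions. The leading asymptotics $\sum 1/\varphi(k)\sim 2\sqrt N$ and $\sim\log N$ only determine the exponents $x^2$ and $e^{2x}$; the corrections (the $\log(1+\sqrt N)$ term in the square-root case, and the Euler constant together with the truncation of the harmonic series in the linear case) force the clean closed forms to be asserted only on the stated ranges. Indeed, keeping the rounded constant $\tfrac12$ in place of $1-e^{-1}$ already destroys the bound $e^{-x^2}$ for large $x$, which is why the sharper step constant must be retained. I would therefore spend most of the write-up on a careful monotone comparison of each partial sum with the corresponding integral, followed by the elementary inversion, and finally a direct check of the boundary region: the threshold $x>35$ in part (ii), and the small values $x\ge 0$ in part (iii), where the additive slack $+4$ and the enlarged rate $2x$ leave room for the corrections.
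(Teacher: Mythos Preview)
Your plan is correct and matches the paper's proof essentially line for line: the same reduction to $w=y_1-y_2$, the same Gr\"onwall-type argument for (i) (the paper phrases it via $g(x)=e^{Lx}w(x)$ and $g'\ge 0$, which is your logarithmic inequality in multiplicative form), and for (ii)--(iii) the same sequence $(x_n)$ with the sharp step constant $1-e^{-1}$, integral comparison of the partial sums, and an explicit check of the boundary range to absorb the lower-order terms. The only cosmetic difference is that the paper bounds the step by $(1-e^{-1})/\varphi(n)$ rather than your $(1-e^{-1})/\varphi(n+1)$, which slightly eases the final numerics but changes nothing structurally.
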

\begin{proof}
For the upper bound in (i) note that as $F(x,0)=0$, the Lipschitz condition with $z=0$ reads $|y'(x)|\leq L|y(x)|$, and the upper bound follows. For the lower bound note that $f(x)=y_1(x)-y_2(x)$ satisfies $|f'(x)|\leq L|f(x)|$. We may assume without loss that $y_1(0)>y_2(0)$. Then we consider $g(x)=e^{Lx}f(x)$. Put $x_0=\inf\{x:f(x)\leq 0\}$, we want to show that $x_0=\infty$. For $x\in[0, x_0]$ we have $g'(x)=e^{Lx}(Lf(x)+f'(x))\geq 0$, in particular $g(x)\geq g(0)$ and therefore $f(x)\geq e^{-Lx} f(0)$. As $f$ is continuous, we see that $x_0$ cannot be finite, and $f(x)\geq e^{-Lx}f(0)$ holds for all $x\geq 0$.

For (ii) and (iii) define $x_n$ as the least positive $x$, such that $|y_1(x)-y_2(x)| = e^{-n}$, and let $x_n^+$ be the largest real number $x$, such that $x_n\leq x\leq x_{n+1}$, and $|y_1(x)-y_2(x)| = e^{-n}$. We will give a lower bound for $x_{n+1}-x_n$, telescope these bounds to get a lower bound for $x_n$, and solve for $n$ to get a lower bound for $y_1-y_2$. 

Suppose first that $F$ satisfies (\ref{eq:Propsquare}) for all real numbers $y\leq z\leq y+1$. Then in $[x_n^+, x_{n+1}]$ we have
\begin{multline*}
|y_1'(x)-y_2'(x)|=|F(x,y_1(x))-F(x, y_2(x)| \leq
\underset{e^{-n-1}\leq |y_1-y_2|\leq e^{-n}}{\sup\limits_{x, y_1, y_2}} |F(x,y)|\\
\leq \max\limits_{e^{-n-1}\leq \delta\leq e^{-n}} \delta\left(1+\sqrt{|\log\delta|}\right) \leq e^{-n}\left(1+\sqrt{n}\right),
\end{multline*}
thus, by the mean value theorem,
\[
\frac{e^{-n}-e^{-n-1}}{x_{n+1}-x_n}\leq e^{-n}\left(1+\sqrt{n}\right),
\]
that is, $x_{n+1}-x_n\geq\frac{1-e^{-1}}{1+\sqrt{n}}$. As $x_0=0$ we obtain 
\begin{multline*}
x_n = \sum_{\nu=0}^{n-1} (x_{\nu+1}-x_\nu) \geq \sum_{\nu=0}^n\frac{1-e^{-1}}{1+\sqrt{\nu}}\\
\geq (1-e^{-1}) \int_0^n\frac{dt}{1+\sqrt{t}} = (1-e^{-1})\left(2\sqrt{n}-2\log(\sqrt{n}+1)
\right)
\end{multline*}
By the definition of $x_n$ we have $|y_1(x)-y_2(x)|>e^{-n}$ for
$0\leq x<x_n$, and we obtain $|y_1(x)-y_2(x)|>e^{-n}$ for $n\geq 3$ and
\[
x<2(1-e^{-1})(\sqrt{n}-\log n).
\]
The right hand side is larger than $1.264\sqrt{n}-1.264\log n$, and for $n>1200$ we conclude that $|y_1(x)-y_2(x)|>e^{-n}$ for $x\leq \sqrt{n+1}$. Choosing $n=\lfloor x^2\rfloor$ we obtain $|y_1(x)-y_2(x)|>e^{-n}\geq e^{-x^2}$, provided that $x>35$.

Now suppose that $F$ satisfies (\ref{eq:Propexpo}) for all real numbers 
$y\leq z\leq y+1$. Then in $[x_n^+, x_{n+1}]$ we have
\[
|y_1'(x)-y_2'(x)| = |F(x, y_1(x))-F(x, y_2(x))| \leq e^{-n}(n+1).
\]
Using the mean value theorem we obtain
\[
\frac{e^{-n}-e^{-n-1}}{x_{n+1}-x_n}\leq e^{-n}(n+1),
\]
that is, $x_{n+1}-x_n\geq \frac{1-e^{-1}}{n+1}$. As $x_0 = 0$ we obtain
\[
x_n =\sum_{\nu=0}^{n-1} (x_{\nu+1}-x_\nu) \geq \sum_{\nu=0}^{n-1} \frac{1-e^{-1}}{\nu+1} \geq (1-e^{-1})\int_1^{n+1}\frac{dt}{t} \geq (1-e^{-1})\log n
\]
If $n\geq 4$ we obtain $x_n\geq\frac{1}{2}\ln (n+1)$. Putting $n=\left\lfloor e^{2x}\right\rfloor$ we obtain $|y_1(x)-y_2(x)|\geq e^{-n}\geq e^{-e^{2x}}$ provided that $n\geq 4$, which is satisfied for $x>1$.

Since $x_4\geq (1-e^{-1})\left(1+\frac{1}{2}+\frac{1}{3}\right)\approx 1.159$, we have $|y_1(x)-y_2(x)|\geq e^{-4}$ for $x\in[0, 1]$, and therefore $|y_1(x)-y_2(x)|\geq e^{-4}\cdot e^{-e^{2x}}$ for all $x\geq 0$.
\end{proof}

The constants 35 and $e^{-4}$ have no particular meaning, we just have to capture lower order terms. We can either do so by prescribing a lower bound for $x$, as we did in (ii), or by introducing a factor as we did in (iii).

In the same way we could give upper bounds corresponding to (iii) of Theorem~\ref{thm:main}, however, it turns out that a simple ad hoc argument is much easier.

\begin{Prop}
\label{Prop:example upper}
\begin{enumerate}
\item[(i)] Let $F:\R^2\rightarrow\R$ be a continuous function satisfying $|F(x, y)|\leq |y|\sqrt{1+\log|y|}$ for all $x$ and $y$ such that $|y|\geq 1$. Then every solution of the initial value problem $y'=F(x,y)$, $y(0)=0$ satisfies $|y(x)|\leq e^{\frac{x^2}{4}+x}$ for all $x\geq 0$.
\item[(ii)] Let $F:\R^2\rightarrow\R$ be a continuous function satisfying $|F(x, y)|\leq |y|\log|y|$ for all $x$ and $y$ such that $|y|\geq e$. Then every solution of the initial value problem $y'=F(x,y)$, $y(0)=0$ satisfies $|y(x)|\leq e^{e^{x}}$ for all $x\geq 0$.
\end{enumerate}
\end{Prop}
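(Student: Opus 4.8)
The plan is to bypass the comparison machinery behind Theorem~\ref{thm:main} entirely and reduce each part to a one-line differential inequality for $\log|y|$. The bounds $e^{x^2/4+x}$ and $e^{e^x}$ are not arbitrary: they are reverse-engineered so that a substitution turns the governing inequality into a linear one. Write $T=1$ in part (i) and $T=e$ in part (ii) for the threshold below which the hypothesis on $F$ says nothing. The first observation is that the claimed bound $h(x)$ satisfies $h(x)\geq T$ for every $x\geq0$, so the inequality $|y(x)|\leq h(x)$ is automatic wherever $|y(x)|\leq T$; only the region in which $|y|$ exceeds $T$ requires an argument.

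To treat that region I would argue by contradiction. Fix $x_1>0$ with $|y(x_1)|>h(x_1)\geq T$, and, using $y(0)=0$ together with continuity, set $a=\sup\{x\in[0,x_1]:|y(x)|\leq T\}$. Then $|y(a)|=T$ and $|y(x)|>T$ throughout $(a,x_1]$, so $y$ is nonvanishing there and $w:=\log|y|$ is continuously differentiable on $(a,x_1]$ with limiting value $\log T$ at $a$. Since $y'=F(x,y)$ with $F$ continuous we have $y\in C^1$, and wherever $y\neq0$ one has $|w'|=|y'|/|y|=|F(x,y)|/|y|$. In part (i) the hypothesis then gives $w'\leq\sqrt{1+w}$ on $(a,x_1]$, whence $\frac{d}{dx}\sqrt{1+w}=\frac{w'}{2\sqrt{1+w}}\leq\tfrac12$; integrating from $a$ and using $w(a^+)=0$ yields $\sqrt{1+w(x_1)}\leq1+\tfrac{x_1-a}{2}\leq1+\tfrac{x_1}{2}$, and squaring gives $w(x_1)\leq x_1+\tfrac{x_1^2}{4}$, i.e.\ $|y(x_1)|\leq e^{x_1^2/4+x_1}$, contradicting the choice of $x_1$.

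Part (ii) is identical in structure but even shorter: here the hypothesis yields the Gronwall-type inequality $w'\leq w$ on $(a,x_1]$ with $w(a^+)=\log e=1$, so $w(x_1)\leq e^{x_1-a}\leq e^{x_1}$ and hence $|y(x_1)|\leq e^{e^{x_1}}$. The only point demanding genuine care is that $|y|$ need not be differentiable at its zeros and that the growth hypothesis is vacuous for $|y|<T$; both difficulties dissolve upon passing to the last crossing time $a$, after which $y$ stays away from zero and the bound on $F$ is in force. Everything else is a routine integration, which is precisely why, as claimed, this ad hoc argument is simpler than extracting explicit constants from the proof of (i)$\Rightarrow$(iii).
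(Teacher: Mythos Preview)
Your argument is correct. In fact, at the point $x=a$ you have $|y(a)|=T\neq 0$, so $w=\log|y|$ is $C^{1}$ in a full neighbourhood of $a$ and the limiting-value language is not even needed; the integration of $\frac{d}{dx}\sqrt{1+w}\le\tfrac12$ (respectively $w'\le w$) from $a$ to $x_1$ is a clean application of the fundamental theorem of calculus (respectively Gronwall), and the contradiction follows exactly as you say.

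The route differs from the paper's. There the authors fix the explicit comparison function $\tilde y(x)=e^{x^2/4+x}$ (respectively $e^{e^x}$), observe that it solves the extremal equation $\tilde y'=\tilde y\sqrt{1+\log\tilde y}$ (respectively $\tilde y'=\tilde y\log\tilde y$), and run a first-touching argument: if $x_0$ were the first point with $|y(x_0)|=\tilde y(x_0)$, then on $[0,x_0)$ one has $\tilde y'-y'>0$ by monotonicity of $t\mapsto t\sqrt{1+\log t}$, forcing $\tilde y(x_0)>y(x_0)$, a contradiction. Your substitution $w=\log|y|$ is the same comparison seen in logarithmic coordinates: it turns the nonlinear barrier inequality into a separable one that you can antidifferentiate by hand, so you never have to name $\tilde y$ or check that it solves an ODE. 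The paper's version has the virtue that it works verbatim for any increasing $g$ in place of $t\mapsto t\sqrt{1+\log t}$, without requiring a closed form for $\int dw/g(e^w)$; yours has the virtue of making transparent why these particular bounds $e^{x^2/4+x}$ and $e^{e^x}$ arise, namely as the result of integrating $\tfrac12$ and applying Gronwall after the substitution. Both are short and either would be acceptable here.
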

\begin{proof}
Let $F$ and $y$ be as in (i). The function $\tilde{y}(x)=e^{\frac{x^2}{4}+x}$ satisfies the equation $y'=y\sqrt{1+\log y}$, $y(0)=1$. We claim that for all $x\geq 0$ we have $|y(x)|< \tilde{y}(x)$. Define $x_0=\sup\{x>0:|y(x)|< \tilde{y}(x)\}$. Clearly $x_0>0$. For $x\in[0, x_0)$ we have $|y(x)|\leq 1$ or 
\begin{multline*}
\tilde{y}'(x) - y'(x) = \tilde{y}(x)\sqrt{1+\log \tilde{y}(x)} - F(x, y(x)) \geq\\
 \tilde{y}(x)\sqrt{1+\log \tilde{y}(x)} - |y(x)|\sqrt{1+\log |y(x)|}>0.
\end{multline*}
If $x_0\neq\infty$, it follows that $\tilde{y}(x_0) > y(x_0)$. In the same way we obtain $\tilde{y}(x_0) > -y(x_0)$, and conclude that $x_0=\infty$.

Now let $F$ and $y$ be as in (ii). The function $\tilde{y}(x)=e^{e^x}$ satisfies the equation $y'=y\log y$, $y(0)=e$, and we obtain
\[
\tilde{y}'(x)-y'(x) = \tilde{y}(x)\log \tilde{y}(x) - F(x, y(x)) \geq \tilde{y}(x)\log \tilde{y}(x) - y(x)\log y(x)
\]
for all $x$ such that $e\leq y(x)<\tilde{y}(x)$, and our claim follows as in the first case.
\end{proof}
In general whenever one can give a lower bound for the growth of the partial sums $\sum_{n\leq N}\frac{1}{\varphi(n)}$, one obtains upper bounds for the growth of solutions and for the convergence of different solutions with different starting values.


\begin{thebibliography}{99}
\bibitem{partial} J. \'A. Cid, R. L. Pouso, Integration by parts and by
substitution unified with applications to Green's theorem and uniqueness
for ODEs, {\em Amer. Math. Monthly} {\bf 123} (2016), 40--52.
\bibitem{Feng} Z. Feng, F. Li, Y. Lv, S. Zhang, A note on Cauchy-Lipschitz-Picard theorem, 
{\em J. Inequal. Appl.} 2016, Paper No. 271, 6 pp. 
\bibitem{Hayek} N. Hayek, J. Trujillo, M. Rivero, B. Bonilla, J. C. Moreno, 
An extension of Picard-Lindel\"off theorem to fractional differential equations, {\em  Appl. Anal.} {\bf 70} (1999), 347--361. 
\bibitem{Morci} C. Mortici, A contractive method for the proof of Picard's theorem, 
{\em Bul. \c{S}tiin\c{t}. Univ. Baia Mare} Ser. B {\bf 14} (1998),  179--184. 
\bibitem{Rudin} W. Rudin, Nonuniqueness and growth in first-order differential
equations, {\em Amer. Math. Monthly} {\bf 89} (1982), 241--244.
\bibitem{Sieg} S. Siegmund, C. Nowak, J. Dibl\'\i k, A generalized Picard-Lindel\"of theorem, 
{\em Electron. J. Qual. Theory Differ. Equ.} 2016, Paper No. 28. 
\end{thebibliography}
\end{document}